\numberwithin{equation}{section}
\theoremstyle{plain}
\newtheorem{thm}{Theorem}[section]
\theoremstyle{remark}
\DeclareMathOperator{\td}{d}
\begin{document}

\title[Asymptotic formulas and inequalities for gamma function]
{Asymptotic formulas and inequalities for gamma function in terms of tri-gamma function}

\author[C. Mortici]{Cristinel Mortici}
\address[Mortici]{Department of Mathematics, Valahia University of T\^argovi\c{s}te, Bd. Unirii 18, 130082 T\^argovi\c{s}te, Romania}
\email{\href{mailto: C. Mortici <cmortici@valahia.ro>}{cmortici@valahia.ro}}
\urladdr{\url{http://www.cristinelmortici.ro}}

\author[F. Qi]{Feng Qi}
\address[Qi]{College of Mathematics, Inner Mongolia University for Nationalities, Tongliao City, Inner Mongolia Autonomous Region, 028043, China; Department of Mathematics, College of Science, Tianjin Polytechnic University, Tianjin City, 300387, China; Institute of Mathematics, Henan Polytechnic University, Jiaozuo City, Henan Province, 454010, China}
\email{\href{mailto: F. Qi <qifeng618@gmail.com>}{qifeng618@gmail.com}, \href{mailto: F. Qi <qifeng618@hotmail.com>}{qifeng618@hotmail.com}, \href{mailto: F. Qi <qifeng618@qq.com>}{qifeng618@qq.com}}
\urladdr{\url{http://qifeng618.wordpress.com}}

\begin{abstract}
In the paper, the authors establish some asymptotic formulas and double inequalities for the factorial $n!$ and the gamma function $\Gamma$ in terms of the tri-gamma function $\psi'$.
\end{abstract}

\keywords{asymptotic formulas; inequalities; gamma function; tri-gamma function}

\subjclass[2010]{26D15; 33B15; 41A10}

\thanks{This paper was typeset using \AmS-\LaTeX}

\maketitle

\section{Introduction and motivation}

We recall that the classical Euler's gamma function may be defined by
\begin{equation}\label{gamma-dfn}
\Gamma(z)=\int^\infty_0t^{z-1} e^{-t}\td t
\end{equation}
for $\Re(z)>0$, that the logarithmic derivative of $\Gamma(x)$ is called psi or di-gamma function and denoted by
\begin{equation}
  \psi(x)=\frac{\td}{\td x}\ln\Gamma(x)=\frac{\Gamma'(x)}{\Gamma(x)}
\end{equation}
for $x>0$, that the derivatives $\psi'(x)$ and $\psi''(x)$ for $x>0$ are respectively called tri-gamma and tetra-gamma functions, and that the derivatives $\psi^{(i)}(x)$ for $i\in\mathbb{N}$ and $x>0$ are called polygamma functions.
\par
We also recall from~\cite[Chapter~XIII]{mpf-1993} and~\cite[Chapter~IV]{widder} that a function $f(x)$ is said to be completely monotonic on an interval $I$ if it has derivatives of all orders on $I$ and satisfies $0\le(-1)^{n}f^{(n)}(x)<\infty$ for $x\in I$ and all integers $n\ge0$. The class of completely monotonic functions may be characterized by the celebrated Bernstein-Widder Theorem~\cite[p.~160, Theorem~12a]{widder} which reads that a necessary and sufficient condition that $f(x)$ should be completely monotonic in $0\le x<\infty$ is that
\begin{equation} \label{berstein-1}
f(x)=\int_0^\infty e^{-xt}\td\alpha(t),
\end{equation}
where $\alpha(t)$ is bounded and non-decreasing and the integral converges for $0\le x<\infty$.
\par
In~\cite[Theorem~2.1]{Sevli-Batir-Modelling}, it was proved that the function
\begin{equation}
F_{\alpha}(x) =\ln \Gamma(x+1) -x\ln x+x-\frac{1}{2}\ln x-\frac12\ln(2\pi)-\frac{1}{12}\psi'(x+\alpha)
\end{equation}
is completely monotonic on $(0,\infty)$ if and only if $\alpha\ge\frac12$ and that the function $-F_{\alpha}(x)$ is completely monotonic on $(0,\infty)$ if and only if $\alpha=0$. Consequently, the double inequality
\begin{equation}\label{double-ineqSevli-BatirModel}
\frac{x^x}{e^x}\sqrt{2\pi x}\,\exp \biggl(\frac{1}{12}\psi'\biggl(x+\frac{1}{2}\biggr) \biggr) <\Gamma(x+1) <\frac{x^x}{e^x}\sqrt{2\pi x}\,\exp \biggl(\frac{1}{12}\psi'(x) \biggr)
\end{equation}
was derived in~\cite[Corollary~2.1]{Sevli-Batir-Modelling}. These results were also established in~\cite{AMIS042013A.tex} and its preprint~\cite{Merkle-Convexity2Complete-Mon.tex} independently from a different origin and by a different motivation. For some more information on bounding the gamma function $\Gamma$, please refer to the newly published paper~\cite{Bukac-Sevli-Gamma.tex}, the survey articles~\cite{bounds-two-gammas.tex, Wendel2Elezovic.tex-JIA, Wendel-Gautschi-type-ineq-Banach.tex}, and plenty of references collected therein.
\par
The goal of this paper is to discover best asymptotic formulas and double inequalities for the factorial $n!=\Gamma(n+1)$ and the gamma function $\Gamma(x)$ in terms of the tri-gamma function $\psi'\bigl(x+\frac12\bigr)$. These results have something to do with the function $F_{\alpha}(x)$ and the double inequality~\eqref{double-ineqSevli-BatirModel}.

\section{An asymptotic formula and a double inequality for $n!$}

In this section, we establish a best asymptotic formula and a double inequality for the factorial $n!=\Gamma(n+1)$ in terms of the tri-gamma function $\psi'\bigl(x+\frac12\bigr)$.

\begin{thm}\label{Qi-Moticic-best-approx}
As $n\to \infty$, the asymptotic formula
\begin{equation}  \label{a}
n! \sim \frac{n^n}{e^n}\sqrt{2\pi n}\,\exp \biggl(\frac{1}{12}\psi'\biggl(n+\frac{1}{2}\biggr) \biggr)
\end{equation}
is the most accurate one among all approximations of the form
\begin{equation}\label{aa}
n! \sim \frac{n^n}{e^n}\sqrt{2\pi n}\,\exp \biggl(\frac{1}{12}\psi'(n+a)\biggr),
\end{equation}
where $a\in\mathbb{R}$.
\end{thm}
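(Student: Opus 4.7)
The plan is to pass to logarithms and analyze the error term
\[
R_a(n) = \ln n! - \biggl[n\ln n - n + \frac{1}{2}\ln(2\pi n) + \frac{1}{12}\psi'(n+a)\biggr].
\]
The asymptotic relation~\eqref{aa} is exactly the statement $R_a(n)\to 0$, and one such formula is ``more accurate'' than another precisely when its error $R_a(n)$ tends to zero more rapidly. It therefore suffices to compute enough terms of the asymptotic expansion of $R_a(n)$ in powers of $1/n$ to identify for which value of $a$ this expansion vanishes to the highest order.

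To carry this out I would combine the classical Stirling expansion
\[
\ln n! = n\ln n - n + \frac{1}{2}\ln(2\pi n) + \frac{1}{12\, n} - \frac{1}{360\, n^3} + O\bigl(1/n^5\bigr)
\]
with the standard asymptotic series for the tri-gamma function
\[
\psi'(x) \sim \frac{1}{x} + \frac{1}{2x^2} + \frac{1}{6x^3} + O\bigl(1/x^5\bigr).
\]
Setting $x=n+a$ and expanding each $1/(n+a)^k$ by the binomial series produces an asymptotic expansion of $\frac{1}{12}\psi'(n+a)$ whose coefficients are polynomials in $a$. Subtracting from Stirling and collecting powers of $1/n$ yields
\[
R_a(n) = -\frac{1}{12}\biggl(\frac{1}{2}-a\biggr)\frac{1}{n^2} + c_3(a)\,\frac{1}{n^3} + O\bigl(1/n^4\bigr),
\]
where $c_3(a)$ is an explicit polynomial in $a$.

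The conclusion is then immediate. The coefficient of $1/n^2$ vanishes if and only if $a=\frac{1}{2}$, so for any other value of $a$ the error $R_a(n)$ is of exact order $1/n^2$, whereas for $a=\frac{1}{2}$ it drops to order $1/n^3$ (a short computation gives $c_3(1/2)=1/240\neq 0$, which pins down the true rate). Hence $a=\frac{1}{2}$ is the unique value producing the fastest decay, and~\eqref{a} is the most accurate approximation of the form~\eqref{aa}. The only genuine difficulty is bookkeeping in the two expansions: one must retain all terms up to order $1/n^3$ in each of the binomial expansions of $1/(n+a)$, $1/(n+a)^2$, and $1/(n+a)^3$, while noting that the higher correction $-1/(30\, x^5)$ from $\psi'$ contributes only at order $1/n^5$ and therefore does not enter the comparison.
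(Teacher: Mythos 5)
Your argument is correct --- I checked the bookkeeping: the coefficient of $1/n^2$ in $R_a(n)$ is indeed $-\frac{1}{12}\bigl(\frac12-a\bigr)$, and at $a=\frac12$ the next coefficient is $-\frac{1}{360}-\frac{1}{12}\bigl(a^2-a+\frac16\bigr)\big|_{a=1/2}=-\frac{1}{360}+\frac{1}{144}=\frac{1}{240}$, consistent with the $\frac{1}{240n^3}$ term appearing in Theorems~\ref{ii-thm} and~\ref{Gamma-asymp-thm}. However, your route is genuinely different from the paper's. The paper introduces the same error term (called $w_n$ there) but never expands $\psi'(n+a)$ by its asymptotic series; instead it forms the difference $w_{n+1}-w_n$, which by the exact recurrence $\psi'(z+1)=\psi'(z)-1/z^2$ reduces to an elementary expression containing the single rational term $\frac{1}{12(n+a)^2}$. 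Expanding that elementary expression gives $w_{n+1}-w_n=\bigl(\frac{1}{12}-\frac{a}{6}\bigr)n^{-3}+O(n^{-4})$, and Mortici's lemma (if $\omega_n\to0$ and $n^k(\omega_n-\omega_{n+1})\to\ell$ with $k>1$, then $n^{k-1}\omega_n\to\ell/(k-1)$) converts this into $n^2w_n\to-\frac{1}{12}\bigl(\frac12-a\bigr)$, the same leading coefficient you found. What the paper's approach buys is economy of hypotheses: it needs only the recurrence identity for $\psi'$ and the lemma, not the full asymptotic series for either $\ln\Gamma$ or $\psi'$. What your approach buys is directness and extra information: it yields the exact order $1/n^2$ for $a\neq\frac12$ and the value $c_3(1/2)=\frac{1}{240}$ in one pass, whereas the paper recovers that constant only later. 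Both proofs rely on the same standard expansions the paper itself cites (Stirling's series and~\cite[p.~260, 6.4.11]{aa}), so your version is a legitimate, complete alternative once the computation sketched is written out.
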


\begin{proof}
For $n\ge 1$, define a sequence $w_{n}$ by
\begin{equation*}
n!=\Gamma(n+1)=\sqrt{2\pi}\,n^{n+1/2}e^{-n}\exp \biggl(
\frac{1}{12}\psi'(n+a) \biggr) \exp w_{n}.
\end{equation*}
Taking into account
\begin{equation}\label{psi-poly-recur}
\psi^{(k)}(z+1)=\psi^{(k)}(z)+(-1)^k\frac{k!}{z^{k+1}}
\end{equation}
for $k=1$, see~\cite[p.~260, 6.4.6]{aa}, yields
\begin{equation*}
w_{n+1}-w_{n}=1+\ln(n+1)-\biggl(n+\frac{3}{2}\biggr) \ln(n+1) +\biggl(n+\frac{1}{2}\biggr) \ln n+\frac{1}{12(n+a)^{2}}
\end{equation*}
and
\begin{equation*}
w_{n+1}-w_{n}=\biggl(-\frac{1}{6}a+\frac{1}{12}\biggr) \frac{1}{n^{3}} +\biggl(\frac{1}{4}a^{2} -\frac{3}{40}\biggr) \frac{1}{n^{4}}+O\biggl(\frac{1}{n^{5}}\biggr).
\end{equation*}
Hence, we have
\begin{equation*}
\lim_{n\to\infty}\bigl\{n^{3}\bigl[w_{n+1}-w_{n}\bigr]\bigr\}=\frac{1}{12}-\frac{1}{6}a.
\end{equation*}
Lemma~1.1 in~\cite{m1, Mortici-aar.tex} states that if the sequence $\{\omega_n:n\in\mathbb{N}\}$ converges to $0$ and
\begin{equation}
\lim_{n\to\infty}n^k(\omega_n-\omega_{n+1})=\ell\in\mathbb{R}
\end{equation}
for $k>1$, then
\begin{equation}
\lim_{n\to\infty}n^{k-1}\omega_n=\frac{\ell}{k-1}.
\end{equation}
Consequently, the sequence $w_n$ converges fastest only if $a=\frac12$.
\end{proof}

\begin{thm}\label{ii-thm}
For every integer $n\ge 1$, we have
\begin{equation}\label{ii}
\exp \biggl(\frac{1}{240n^{3}}-\frac{11}{6720n^{5}}\biggr)
<\frac{e^nn!}{n^{n}\sqrt{2\pi n}\exp \bigl( \frac{1}{12}\psi'\bigl(n+\frac{1}{2}\bigr)\bigr)}
<\exp \frac{1}{240n^{3}}.
\end{equation}
\end{thm}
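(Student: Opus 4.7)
Set $u_n := \ln(n!) - n\ln n + n - \tfrac12\ln(2\pi n) - \tfrac{1}{12}\psi'\bigl(n+\tfrac12\bigr)$, so~\eqref{ii} becomes $\tfrac{1}{240n^3} - \tfrac{11}{6720n^5} < u_n < \tfrac{1}{240n^3}$. My strategy is to represent $u_n$ as a Laplace transform and reduce both inequalities to sign conditions on a single explicit function of a real variable.

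Binet's first formula gives
\[
\ln\Gamma(n+1) - n\ln n + n - \tfrac12\ln(2\pi n) = \int_0^\infty \frac{g(t)}{t}\, e^{-nt}\,\td t,\qquad g(t) := \frac12 - \frac1t + \frac{1}{e^t-1},
\]
while $\psi'(x) = \int_0^\infty \frac{t}{1-e^{-t}}\, e^{-xt}\,\td t$ combined with $\frac{e^{-t/2}}{1-e^{-t}} = \frac{1}{2\sinh(t/2)}$ yields $\tfrac{1}{12}\psi'(n+\tfrac12) = \int_0^\infty \frac{t}{24\sinh(t/2)}\, e^{-nt}\,\td t$. Subtracting,
\[
u_n = \int_0^\infty h(t)\, e^{-nt}\,\td t,\qquad h(t) := \frac{g(t)}{t} - \frac{t}{24\sinh(t/2)}.
\]
Since $\tfrac{1}{240 n^3} = \tfrac{1}{480}\int_0^\infty t^2 e^{-nt}\,\td t$ and $\tfrac{11}{6720 n^5} = \tfrac{11}{161280}\int_0^\infty t^4 e^{-nt}\,\td t$, proving~\eqref{ii} reduces to showing, for every $t > 0$, the pointwise inequalities
\[
h(t) < \frac{t^2}{480} \quad\text{and}\quad h(t) > \frac{t^2}{480} - \frac{11 t^4}{161280}.
\]

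For the upper estimate I multiply through by the positive factor $11520\, t^2\sinh(t/2)$; using $g(t)/t = \frac{\coth(t/2)}{2t} - \frac{1}{t^2}$, the claim becomes
\[
\phi_1(t) := 240 t\cosh(t/2) - 480\sinh(t/2) - 20 t^3 - t^4 \sinh(t/2) < 0,\qquad t > 0.
\]
Expanding both hyperbolic functions as Taylor series, the contributions through $t^5$ cancel identically and one finds
\[
\phi_1(t) = -\sum_{k=3}^\infty \frac{8\bigl[(2k-2)(2k-1)(2k)(2k+1) - 60 k\bigr]}{4^k\,(2k+1)!}\, t^{2k+1},
\]
which is termwise negative because $(2k-2)(2k-1)(2k)(2k+1) \ge 840 > 60k$ for every $k \ge 3$.

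The lower estimate is treated analogously and reduces, after the same clearing of denominators, to showing that $\phi_2(t) := 336\,\phi_1(t) + 11 t^6\sinh(t/2) > 0$ for $t > 0$. The constant $\tfrac{11}{6720}$ is calibrated precisely so that the $t^7$ coefficient of $\phi_2$ vanishes; the main obstacle is then to verify that all remaining Taylor coefficients of $\phi_2$, at $t^{2k+1}$ for $k \ge 4$, are strictly positive. After combining the two contributions this reduces to
\[
352\,(2k-4)(2k-3)(2k-2)(2k-1)(2k)(2k+1) + 161280\, k > 2688\,(2k-2)(2k-1)(2k)(2k+1),
\]
which follows at once by noting that $(2k-4)(2k-3)(2k-2)(2k-1)(2k)(2k+1) = (2k-4)(2k-3)\cdot(2k-2)(2k-1)(2k)(2k+1)$ and $352(2k-4)(2k-3) \ge 7040 > 2688$ for $k \ge 4$.
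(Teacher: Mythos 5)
Your proof is correct, but it takes a genuinely different route from the paper. The paper argues by telescoping: it defines $f(n)$ and $g(n)$ as the logarithmic forms of the two claimed inequalities, uses the recurrence $\psi'(z+1)=\psi'(z)-1/z^{2}$ to write the differences $u(x)=f(x+1)-f(x)$ and $v(x)=g(x+1)-g(x)$ in closed elementary form, shows $u''>0$ and $v''<0$ by exhibiting the second derivatives as explicit rational functions of one sign, and concludes from $u,v\to0$ at infinity that $f$ increases and $g$ decreases to their common limit $0$. You instead package the whole quantity as a single Laplace transform via Binet's first formula and the integral representation of $\psi'$, and reduce both bounds to the pointwise sign of one entire function of $t$, checked termwise on its Taylor coefficients. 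Your route buys more: since the pointwise inequalities $h(t)<t^{2}/480$ and $h(t)>t^{2}/480-11t^{4}/161280$ hold for all $t>0$, you obtain the double inequality for every real $x>0$, which subsumes Theorem~\ref{ii-thm-continou} as well (the paper proves that separately using Koumandos's bounds); the price is reliance on the two integral representations rather than on purely elementary recurrences. Two small slips to repair: (i) the multiplier that produces your displayed $\phi_1$ is $480\,t^{2}\sinh(t/2)$, not $11520\,t^{2}\sinh(t/2)$ (harmless, as both are positive); (ii) the justification ``$(2k-2)(2k-1)(2k)(2k+1)\ge 840>60k$'' is literally false for $k\ge 14$ --- replace it by, say, $(2k-2)(2k-1)(2k)(2k+1)\ge 120(2k+1)>60k$ for $k\ge3$, which is what you actually need. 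Neither slip affects the validity of the argument.
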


\begin{proof}
The double inequality~\eqref{ii} may be rewritten as
\begin{equation} \label{f}
f(n)=\ln \Gamma(n+1)-\biggl(n+\frac{1}{2}\biggr)
\ln n+n-\frac{1}{2}\ln (2\pi) -\frac{1}{12}\psi'\biggl(n+\frac{1}{2}\biggr) -\frac{1}{240n^{3}} \le0
\end{equation}
and
\begin{multline}\label{g}
g(n)=\ln \Gamma(n+1)-\biggl(n+\frac{1}{2}\biggr)\ln n+n-\frac{1}{2}\ln (2\pi) \\
-\frac{1}{12}\psi'\biggl(n+\frac{1}{2}\biggr) -\frac{1}{240n^{3}}+\frac{11}{6720n^{5}}
\ge0.
\end{multline}
Employing the recurrence formula~\eqref{psi-poly-recur} applied to $k=1$ and straightforwardly computing reveal that $f(n+1)-f(n)=u(n)$ and $g(n+1)-g(n)=v(n)$, where
\begin{align*}
u(x)  &=1+\ln(x+1) -\biggl(x+\frac{3}{2}\biggr) \ln(x+1) +\biggl(x+\frac{1}{2}\biggr) \ln x\\
&\quad+\frac{1}{12\bigl(x+\frac{1}{2}\bigr)^{2}} -\frac{1}{240(x+1)^{3}}+\frac{1}{240x^{3}}
\end{align*}
and
\begin{align*}
v(x)  &=1+\ln(x+1) -\biggl(x+\frac{3}{2}\biggr) \ln(x+1) +\biggl(x+\frac{1}{2}\biggr) \ln x+\frac{1}{12\bigl(x+\frac{1}{2}\bigr)^{2}} \\
&\quad-\frac{1}{240(x+1)^{3}}+\frac{1}{240x^{3}}+\frac{11}{6720(x+1)^{5}}-\frac{11}{6720x^{5}}.
\end{align*}
It is not difficult to verify that
\begin{equation*}
u''(x) = \frac{13x+74x^{2}+232x^{3}+391x^{4}+330x^{5}+110x^{6}+1}{20x^{5}(x+1)^{5}(2x+1)^{4}} >0
\end{equation*}
and
\begin{equation*}
v''(x) =-\frac{Q(x)}{1120x^{7}(x+1)^{7}(2x+1)^{4}}<0,
\end{equation*}
where
\begin{align*}
Q(x)  &=825x+5499x^{2}+21325x^{3}+52589x^{4} \\
&\quad+83867x^{5}+83881x^{6}+47936x^{7}+11984x^{8}+55.
\end{align*}
This shows that $u(x)$ is strictly convex and $v(x)$ is strictly concave on $(0,\infty)$. Further considering $\lim_{x\to\infty}u(x) =\lim_{x\to\infty}v(x) =0$, we obtain that $u(x)>0$ and $v(x)<0$ on $(0,\infty)$. Consequently, the sequence $f(n)$ is strictly increasing and $g(n)$ is strictly decreasing while they both converge to $0$. As a result, we conclude that $f(n)<0$ and $g(n)>0$ for every integer $n\ge 1$. The proof of Theorem~\ref{ii-thm} is complete.
\end{proof}

\section{An asymptotic series and a double inequality for $\Gamma$}

We now discover an asymptotic series and a double inequality for the gamma function $\Gamma(x)$ in terms of the tri-gamma function $\psi'\bigl(x+\frac12\bigr)$.

\begin{thm}\label{Gamma-asymp-thm}
As $x\to\infty$, we have
\begin{multline}
\Gamma(x+1)\sim\sqrt{2\pi}\,x^{x+1/2}\exp \biggl(\frac{1}{12}\psi'\biggl(x+\frac{1}{2}\biggr)-x+\frac{1}{240}\frac1{x^{3}}\\*
-\frac{11}{6720}\frac1{x^{5}}+\frac{107}{80640}\frac1{x^{7}} -\frac{2911}{1520640}\frac1{x^{9}}+\dotsm\biggr).
\end{multline}
\end{thm}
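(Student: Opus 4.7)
The plan is to compare two classical Bernoulli-coefficient asymptotic expansions and subtract them term by term.

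First I would recall Stirling's series in the form
\begin{equation*}
\ln\Gamma(x+1)-\biggl(x+\frac{1}{2}\biggr)\ln x+x-\frac{1}{2}\ln(2\pi)
\sim\sum_{k=1}^{\infty}\frac{B_{2k}}{2k(2k-1)\,x^{2k-1}},
\end{equation*}
whose first few terms are $\frac{1}{12x}-\frac{1}{360x^{3}}+\frac{1}{1260x^{5}}-\frac{1}{1680x^{7}}+\frac{1}{1188x^{9}}-\dotsm$, so that it remains only to analyse the difference of this series and $\frac{1}{12}\psi'\bigl(x+\frac{1}{2}\bigr)$.

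Second, I would derive the asymptotic expansion of $\psi'\bigl(x+\frac{1}{2}\bigr)$ from the standard series $\psi'(y)\sim\frac{1}{y}+\frac{1}{2y^{2}}+\sum_{k\ge 1}\frac{B_{2k}}{y^{2k+1}}$ together with the trigamma duplication identity $\psi'(x)+\psi'\bigl(x+\frac{1}{2}\bigr)=4\psi'(2x)$. A direct substitution yields
\begin{equation*}
\psi'\biggl(x+\frac{1}{2}\biggr)\sim\frac{1}{x}-\sum_{k=1}^{\infty}\frac{(2^{2k-1}-1)B_{2k}}{2^{2k-1}\,x^{2k+1}}
=\frac{1}{x}-\frac{1}{12x^{3}}+\frac{7}{240x^{5}}-\frac{31}{1344x^{7}}+\dotsm,
\end{equation*}
an expansion containing only odd powers of $1/x$. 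The alternative route of setting $y=x+\frac{1}{2}$ in the expansion of $\psi'(y)$ and then expanding each $(x+\frac{1}{2})^{-n}$ in powers of $1/x$ works too, but produces many even-power contributions that have to cancel by hand, so the duplication identity is cleaner.

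Third, I would multiply this expansion by $\frac{1}{12}$ and subtract it from Stirling's series. The leading $\frac{1}{12x}$ terms cancel exactly, which is precisely the cancellation that singled out $a=\frac{1}{2}$ in Theorem~\ref{Qi-Moticic-best-approx}; combining Bernoulli-number fractions order by order then yields the coefficients $\frac{1}{240},-\frac{11}{6720},\frac{107}{80640},-\frac{2911}{1520640},\ldots$ listed in the statement. Exponentiating finally gives the asserted asymptotic for $\Gamma(x+1)$. The main obstacle is nothing conceptual but simply the arithmetic book-keeping of Bernoulli-number fractions in two classical series carried out to whatever order one wishes to record; no analytic estimates are required, since only an asymptotic equivalence is claimed.
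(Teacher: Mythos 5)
Your proposal is correct and follows the paper's overall strategy: write $\Gamma(x+1)=\sqrt{2\pi}\,x^{x+1/2}e^{-x}\exp\bigl(\frac{1}{12}\psi'\bigl(x+\frac12\bigr)\bigr)\exp h(x)$, expand both $\ln\Gamma(x+1)-\bigl(x+\frac12\bigr)\ln x+x-\frac12\ln(2\pi)$ and $\psi'\bigl(x+\frac12\bigr)$ as Bernoulli-number series in $1/x$, and subtract. The one genuine difference is in how you re-expand $\psi'\bigl(x+\frac12\bigr)$ in powers of $1/x$: the paper substitutes $x+\frac12$ into $\psi'(y)=\sum_{m\ge1}B_{m-1}/y^{m}$ and then expands each $\bigl(x+\frac12\bigr)^{-m}$ by the binomial series, so that many even-power contributions appear and must cancel, whereas you invoke the duplication identity $\psi'(x)+\psi'\bigl(x+\frac12\bigr)=4\psi'(2x)$ to land directly on the odd-power expansion $\psi'\bigl(x+\frac12\bigr)\sim\frac1x-\sum_{k\ge1}\frac{(2^{2k-1}-1)B_{2k}}{2^{2k-1}x^{2k+1}}$. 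Your route is cleaner and less error-prone for the bookkeeping (and I have checked that it reproduces all four stated coefficients $\frac{1}{240}$, $-\frac{11}{6720}$, $\frac{107}{80640}$, $-\frac{2911}{1520640}$), while the paper's binomial re-expansion has the mild advantage of not requiring the multiplication theorem and of working verbatim for a general shift $x+a$. Either way the argument is complete once one notes that the two classical expansions are genuine asymptotic series, so the termwise difference is again an asymptotic series for $h(x)$.
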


\begin{proof}
Motivated by the inequality~\eqref{ii}, we now consider a new function $h(x)$ defined by
\begin{equation*}
\Gamma(x+1) =\sqrt{2\pi}\,x^{x+1/2}e^{-x}\exp \biggl(
\frac{1}{12}\psi'\biggl(x+\frac{1}{2}\biggr) \biggr) \exp h(x) ,
\end{equation*}
that is,
\begin{equation*}
h(x) =\biggl[ \ln \Gamma(x+1) -\biggl(x+\frac{1}{2}\biggr) \ln x+x-\ln \sqrt{2\pi}\,\biggr] -\frac{1}{12}\psi'\biggl(x+\frac{1}{2}\biggr).
\end{equation*}
Using the formulas
\begin{equation*}
\ln \Gamma(x+1) -\biggl(x+\frac{1}{2}\biggr) \ln x+x-\ln \sqrt{2\pi}\,=\sum_{m=1}^\infty \frac{B_{2m}}{2m(2m-1) x^{2m-1}}
\end{equation*}
and
\begin{equation*}
\psi'(x) =\frac{1}{x}+\frac{1}{2x^{2}}+\sum_{m=1}^\infty \frac{B_{2m}}{x^{2m+1}}=\sum_{m=1}^{\infty} \frac{B_{m-1}}{x^{m}},
\end{equation*}
see~\cite[p.~257, 6.1.40]{aa} and~\cite[p.~260, 6.4.11]{aa}, figures out
\begin{equation}\label{h(x)-diff}
h(x) =\sum_{m=1}^\infty \frac{B_{2m}}{2m(2m-1)x^{2m-1}}-\sum_{m=1}^\infty \frac{B_{m-1}}{12\bigl(x+\frac{1}{2}\bigr)^{m}},
\end{equation}
where $B_{k}$ for $k\ge0$ denote Bernoulli numbers which may be generated by
\begin{equation*}
\frac{z}{e^z-1}=\sum_{k=0}^\infty B_k\frac{z^k}{k!}=1-\frac{z}2+\sum_{k=1}^\infty B_{2k}\frac{z^{2k}}{(2k)!}, \quad \vert z\vert<2\pi.
\end{equation*}
Making use of
\begin{multline*}
\sum_{k=1}^{m}\frac{a_{k}}{\bigl( x+\frac{1}{2}\bigr) ^{k}}
=\sum_{k=1}^{m}a_{k}\biggl( 1+\frac{1}{2x}\biggr) ^{-k}\frac{1}{x^{k}}\\
=\sum_{k=1}^{m}a_{k}\Biggl[\sum_{i=0}^\infty\binom{-k}{i} \frac{1}{2^{i}x^{i}}\Biggr]\frac{1}{x^{k}}
=\sum_{k=1}^{m}\sum_{i=0}^\infty \frac{a_{k}}{2^{i}}\binom{-k}{i}\frac{1}{x^{k+i}}
\end{multline*}
in~\eqref{h(x)-diff}, where $a_k$ is any sequence and
\begin{equation*}
\binom{-k}{i}=\frac{1}{i!}\prod_{\ell=0}^{i-1}(-k-\ell),
\end{equation*}
we obtain that
\begin{equation*}
h(x) =\frac{1}{240}\frac1{x^{3}}-\frac{11}{6720}\frac1{x^{5}}+\frac{107}{80640}\frac1{x^{7}} -\frac{2911}{1520640}\frac1{x^{9}}+ O\biggl(\frac1{x^{11}}\biggr).
\end{equation*}
The proof of Theorem~\ref{Gamma-asymp-thm} is complete.
\end{proof}

\begin{thm}\label{ii-thm-continou}
For every integer $x\ge 1$, we have
\begin{equation}\label{ii-continuous}
\exp \biggl(\frac{1}{240x^{3}}-\frac{11}{6720x^{5}}\biggr)
<\frac{e^{x}\Gamma(x+1)}{x^{x}\sqrt{2\pi x}\exp \bigl( \frac{1}{12}\psi'\bigl(x+\frac{1}{2}\bigr) \bigr)}
<\exp \frac{1}{240x^{3}}.
\end{equation}
\end{thm}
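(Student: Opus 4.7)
The plan is to promote Theorem~\ref{ii-thm} from integers to real arguments by a telescoping argument that directly recycles the auxiliary functions $u$ and $v$ built in its proof. The expressions for $f$ and $g$ displayed in~\eqref{f} and~\eqref{g} make perfect sense for every real $x>0$, and~\eqref{ii-continuous} is precisely the assertion that $f(x)<0<g(x)$ on the real half-line (stated only for $x\ge1$, but in fact valid on all of $(0,\infty)$).

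The routine manipulation that produced $u$ and $v$ in the proof of Theorem~\ref{ii-thm} used only the recurrence $\psi'(x+1)=\psi'(x)-1/x^{2}$ and no integrality hypothesis on $x$; hence the functional identities $f(x+1)-f(x)=u(x)$ and $g(x+1)-g(x)=v(x)$ hold for every real $x>0$, while the convexity/concavity arguments already recorded there give $u(x)>0$ and $v(x)<0$ throughout $(0,\infty)$. Theorem~\ref{Gamma-asymp-thm} further implies $f(x)=O(1/x^{5})$ and $g(x)=O(1/x^{7})$ as $x\to\infty$, so $f(x),g(x)\to 0$ at infinity.

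Iterating the functional identities and passing to the limit then yields the absolutely convergent representations
\begin{equation*}
f(x)=-\sum_{k=0}^{\infty}u(x+k),\qquad g(x)=-\sum_{k=0}^{\infty}v(x+k),
\end{equation*}
whose convergence is secured by the decays $u(y)=O(1/y^{5})$ and $v(y)=O(1/y^{7})$ read off from the same asymptotic expansion. The established signs of $u$ and $v$ force $f(x)<0<g(x)$ on $(0,\infty)$, which is exactly~\eqref{ii-continuous}.

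The only modest obstacle is the bookkeeping step of matching the tail decay of $u$ and $v$ against the vanishing of $f$ and $g$ at infinity and verifying that the telescoping series converges and telescopes as claimed; once these are in place, all the genuine analytic weight has already been absorbed into Theorem~\ref{ii-thm} and Theorem~\ref{Gamma-asymp-thm}, so the present theorem is essentially an assembly of ingredients already on the table rather than a fresh argument.
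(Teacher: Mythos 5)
Your proposal is correct, but it follows a genuinely different route from the paper. You recycle the functions $u$ and $v$ from the proof of Theorem~\ref{ii-thm}: since the identities $f(x+1)-f(x)=u(x)$ and $g(x+1)-g(x)=v(x)$ rest only on $\Gamma(x+2)=(x+1)\Gamma(x+1)$ and $\psi'(z+1)=\psi'(z)-1/z^2$, they hold for all real $x>0$, and the convexity/concavity argument there already yields $u>0$ and $v<0$ on all of $(0,\infty)$, not just at integers; telescoping against $f,g\to0$ then gives $f(x)=-\sum_{k\ge0}u(x+k)<0$ and $g(x)=-\sum_{k\ge0}v(x+k)>0$. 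The convergence issue you flag is in fact automatic, since the partial sums are exactly $f(x+N)-f(x)$ and $f(x+N)\to0$, so no separate decay estimate on $u$ or $v$ is needed. The paper instead proves the result by a direct pointwise estimate: it sandwiches the Binet function and $\psi'$ between explicit rational bounds taken from Koumandos, obtaining rational majorant $a(x)$ and minorant $b(x)$ for $f$ and $g$ whose signs are read off from polynomials $A$ and $B$ with positive coefficients. Your approach buys self-containedness (no external inequalities, no fresh polynomial sign checks, and it extends the conclusion to all real $x>0$); the paper's approach buys a finite, series-free verification that depends only on the statements of the quoted inequalities rather than on re-opening the proof of Theorem~\ref{ii-thm}.
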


\begin{proof}
Let $f(x)$ and $g(x)$ for $x\in[1,\infty)$ be defined by~\eqref{f} and~\eqref{g} respectively.
Making use of inequalities
\begin{align*}
\frac{1}{12x}-\frac{1}{360x^{3}}+\frac{1}{1260x^{5}}
&<\ln \Gamma(x+1) -\biggl(x+\frac{1}{2}\biggr) \ln x+x-\frac{1}{2}\ln (2\pi)  \\
&<\frac{1}{12x}-\frac{1}{360x^{3}}+\frac{1}{1260x^{5}}-\frac{1}{1680x^{7}}
\end{align*}
and
\begin{equation*}
\frac{1}{x}+\frac{1}{2x^{2}}+\frac{1}{6x^{3}}-\frac{1}{30x^{5}}
<\psi'(x)
<\frac{1}{x}+\frac{1}{2x^{2}}+\frac{1}{6x^{3}}-\frac{1}{30x^{5}}+\frac{1}{42x^{7}},
\end{equation*}
which may be deduced from~\cite[Theorem~2 and Corollary~1]{Koumandos-jmaa-06}, finds that $f(x)<a(x)$ and $g(x)>b(x)$, where
\begin{align*}
a(x)  &=\frac{1}{12x}-\frac{1}{360x^{3}}+\frac{1}{1260x^{5}} -\frac{1}{12}\Biggl[ \frac{1}{x+\frac{1}{2}}+\frac{1}{2\bigl(x+\frac{1}{2}\bigr)^{2}}\\
&\quad +\frac{1}{6\bigl(x+\frac12\bigr)^{3}}-\frac{1}{30\bigl(x+\frac{1}{2}\bigr)^{5}}\Biggr] -\frac{1}{240x^{3}}\\
&= -\frac{A(x-1)}{5040x^{5}(2x+1)^{5}}\\
&<0,\\
b(x)  &=\frac{1}{12x}-\frac{1}{360x^{3}}+\frac{1}{1260x^{5}}-\frac{1}{1680x^{7}} -\frac{1}{12}\Biggl[ \frac{1}{x+\frac{1}{2}}+\frac{1}{2\bigl(x+\frac{1}{2}\bigr)^{2}}\\
&\quad +\frac{1}{6\bigl(x+\frac12\bigr)^{3}}-\frac{1}{30\bigl(x+\frac{1}{2}\bigr)^{5}} +\frac{1}{42\bigl(x+\frac{1}{2}\bigr)^{7}}\Biggr] -\frac{1}{240x^{3}}+\frac{11}{6720x^{5}}\\
&= \frac{B(x-1)}{20160x^{7}(2x+1)^{7}}\\
&>0,\\
A(x)&=3760x+6565x^{2}+5310x^{3}+1980x^{4}+264 x^{5}+785,\\
B(x)  &= 93268x+263179x^{2}+382830x^{3} \\
&\quad+315336 x^{4}+147504x^{5}+35952x^{6}+3424x^{7}+12547.
\end{align*}
The proof of Theorem~\ref{ii-thm-continou} is thus complete.
\end{proof}

\subsection*{Acknowledgements}
The work of the first author was supported in part by the Romanian National Authority for Scientific Research, CNCS-UEFISCDI, under Grant No. PN-II-ID-PCE-2011-3-0087.


\begin{thebibliography}{99}

\bibitem{aa}
M. Abramowitz and I. A. Stegun (Eds), \textit{Handbook of Mathematical Functions with Formulas, Graphs, and Mathematical Tables}, National Bureau of Standards, Applied Mathematics Series \textbf{55}, 10th printing, Dover Publications, New York and Washington, 1972.

\bibitem{Koumandos-jmaa-06}
S. Koumandos, \textit{Remarks on some completely monotonic functions}, J. Math. Anal. Appl. \textbf{324} (2006), no.~2, 1458\nobreakdash--1461; Available online at \url{http://dx.doi.org/10.1016/j.jmaa.2005.12.017}.

\bibitem{mpf-1993}
D. S. Mitrinovi\'c, J. E. Pe\v{c}ari\'c, and A. M. Fink, \textit{Classical and New Inequalities in Analysis}, Kluwer Academic Publishers, 1993.

\bibitem{m1}
C. Mortici, \emph{A quicker convergence toward the $\gamma$ constant with the logarithm term involving the constant $e$}, Carpathian J. Math. \textbf{26} (2010), no.~1, 86\nobreakdash--91.

\bibitem{AMIS042013A.tex}
F. Qi, \textit{A completely monotonic function involving the gamma and tri-gamma functions}, Appl. Math. Inf. Sci. (2014), in press.

\bibitem{Merkle-Convexity2Complete-Mon.tex}
F. Qi, \textit{A completely monotonic function involving the gamma and tri-gamma functions}, available online at \url{http://arxiv.org/abs/1307.5407}.

\bibitem{bounds-two-gammas.tex}
F. Qi, \textit{Bounds for the ratio of two gamma functions}, J. Inequal. Appl. \textbf{2010} (2010), Article ID 493058, 84~pages; Available online at \url{http://dx.doi.org/10.1155/2010/493058}.

\bibitem{Wendel2Elezovic.tex-JIA}
F. Qi and Q.-M. Luo, \textit{Bounds for the ratio of two gamma functions: from Wendel's asymptotic relation to Elezovi\'c-Giordano-Pe\v{c}ari\'c's theorem}, J. Inequal. Appl. 2013, \textbf{2013}:542, 20~pages; Available online at \url{http://dx.doi.org/10.1186/1029-242X-2013-542}.

\bibitem{Wendel-Gautschi-type-ineq-Banach.tex}
F. Qi and Q.-M. Luo, \textit{Bounds for the ratio of two gamma functions---From Wendel's and related inequalities to logarithmically completely monotonic functions}, Banach J. Math. Anal. \textbf{6} (2012), no.~2, 132\nobreakdash--158.

\bibitem{Bukac-Sevli-Gamma.tex}
F. Qi and Q.-M. Luo, \textit{Complete monotonicity of a function involving the gamma function and applications}, Period. Math. Hungar. \textbf{??} (2014), in press; Available online at \url{http://dx.doi.org/10.1007/???}.

\bibitem{Mortici-aar.tex}
F. Qi and C. Mortici, \textit{Some best approximation formulas and inequalities for Wallis ratio}, available online at \url{http://arxiv.org/abs/1312.3782}.

\bibitem{Sevli-Batir-Modelling}
H. \c{S}evli and N. Bat{\i}r, \textit{Complete monotonicity results for some functions involving the gamma and polygamma functions}, Math. Comput. Modelling \textbf{53} (2011), 1771\nobreakdash--1775; Available online at \url{http://dx.doi.org/10.1016/j.mcm.2010.12.055}.

\bibitem{widder}
D. V. Widder, \textit{The Laplace Transform}, Princeton University Press, Princeton, 1946.

\end{thebibliography}
\end{document}